\newcommand{\PR}{\mathds{P}}
\newcommand{\EXP}{\mathds{E}}
\newcommand\VEC{\mathbf}
\newcommand\MEAN[1]{\langle \VEC #1\rangle}
\newcommand\QUAD[2]{ \mathinner{#1}^\TRANS #2 {#1} }
\newcommand\TRANS{\intercal}
\DeclareMathOperator\VVEC{vec}
\newcommand\reals{\mathds{R}}
\newtheorem{problem}        {Problem}
\newcommand\important[1]{\textbf{\itshape #1}}
\begin{document}

\title{\LARGE \bf Team-Optimal  Solution of Finite Number of Mean-Field Coupled LQG Subsystems}

\author{Jalal Arabneydi and Aditya Mahajan% <-this % stops a space
\thanks{This work was supported by the
Natural Sciences and Engineering Research Council of Canada through Grant NSERC-RGPIN 402753-11.}
\thanks{Jalal Arabneydi and Aditya Mahajan are with the Department of
  Electrical and Computer Engineering, McGill University, 
  Montreal, QC, Canada. 
        Email: $\{$jalal.arabneydi@mail.mcgill.ca,
        aditya.mahajan@mcgill.ca$\}$}
}

\maketitle

\vspace*{-5.2cm}{\footnotesize{Proceedings of IEEE Conference on Decision and Control, 2015.}}
\vspace*{4.45cm}

\begin{abstract}
  A decentralized control system with linear dynamics, quadratic cost, and
  Gaussian disturbances is considered. The system consists of a finite number
  of subsystems whose dynamics and per-step cost function are coupled through
  their mean-field (empirical average). The system has mean-field sharing
  information structure, i.e., each controller observes the state of its local
  subsystem (either perfectly or with noise) and the mean-field. It is shown
  that the optimal control law is unique, linear, and identical across all
  subsystems. Moreover, the optimal gains are computed by solving two
  decoupled Riccati equations in the full observation model and by solving an
  additional filter Riccati equation in the noisy observation model. These
  Riccati equations do not depend on the number of subsystems. It is also
  shown that the optimal decentralized performance is the same as the optimal
  centralized performance. An example, motivated by smart grids, is presented
  to illustrate the result.
\end{abstract}

\section{Introduction}

\subsection{Motivation and literature overview}

Team theory investigates multi-agent decision problems in which all agents
share a common objective. Such problems arise in various applications
including cyber-physical systems, networked control systems, surveillance and
sensor networks, communication networks, smart grids, robotics, and
organizational economics. 

Not much is known regarding the optimal control of general team problems. Most
of the results in the literature are for specific information structures such
as delayed state sharing~\cite{AicardiDavoliMinciardi:1987}, delaying
observation sharing~\cite{NMT:delay-sharing}, periodic
sharing~\cite{OoiVerboutLudwigWornell:1997}, belief
sharing~\cite{Yuksel:2009}, mean-field sharing~\cite{AM:mft-cdc} and
partial-history sharing~\cite{NMT:partial-history-sharing}. We refer the
reader to~\cite{MMRY:tutorial-CDC} for an overview.

There are two main challenges in solving team problems:
\begin{enumerate}
  \item Team problems are \emph{conceptually} difficult due to the
    decentralized nature of the information available to the controllers. In
    particular, controllers need to cooperate and coordinate to minimize a
    common cost but they have different information about the state of the
    environment. Due to this discrepancy of information, dynamic programming,
    which is one of the main solution techniques for the optimal design of
    stochastic systems, does not work directly for team problems.

  \item Team problems are \emph{computationally} difficult. Even when a
    dynamic programming decomposition is obtained, the solution complexity
    increases exponentially or double exponentially with the number of
    controllers. So, it is difficult to use the dynamic programming solution
    for systems with large number of controllers.
\end{enumerate}

Similar challenges exist in dynamic games; however, they have been
successfully resolved for a class of models known as \emph{mean-field
games}~\cite{Huang:2007,Huang:2007b,LarsyLions:2007,LiZhangFeng:2008,
LarsyLions:2011,Huang:2012,Elliott:2013}.
The salient feature of
these models is that the agents/players/controllers are coupled in their
dynamics and per-step cost functions only through the mean-field (i.e., the
empirical average or the empirical distribution). If the number of players are
large, then an approximately optimal solution for these models can be obtained
by solving the infinite population limit. The infinite population limit is
easier to solve than the finite population model because, when the population
is asymptotically large, the action of a single controller does not affect the
mean-field. Therefore, the optimal solution can be obtained by solving two
coupled equations: a backward dynamic programming equation that determines the
best-response of a controller given the trajectory of the mean-field; and a
forward Fokker-Planck equation that determines the evolution of the mean-field
given the strategies of the individual controllers. A consistent solution of
these equations determine Nash equilibrium strategies. A desirable feature of
the solution is that the resultant control laws and the complexity of the
solution do not depend on the number of controllers. 

Motivated by these results, we investigate \emph{team-optimal} (rather than
Nash equilibrium) solution of mean-field models. In this paper, we focus on
linear-quadratic models. In particular, the system consists of a \emph{finite}
number of subsystems; the dynamics of the subsystems are coupled through the
mean-field of the states. The per-step cost is also coupled through the
mean-field. We assume that the controllers have \emph{mean-field sharing}
information structure introduced in~\cite{AM:mft-cdc}; that is, each
controller observes it's local state (either completely or with noise) and the
mean-field of the states of all subsystems. In some applications such as
communication networks, the sharing of the mean-field happens naturally. In
other applications, such as robotic teams, it is possible to share the
mean-field using distributed consensus algorithms.

The main difference between our setup and mean-field games is the following:
(i)~We assume a finite number of controllers and obtain team-optimal control
strategies. In mean-field games, one assumes asymptotically large number of
controllers and obtains Nash equilibrium strategies. (ii)~We assume that each
controller observes the mean-field. In mean-field games, this assumption is
not made. However, when the number of controllers goes to infinity, both the
information structures are equivalent because the mean-field becomes a
deterministic process. 

\subsection {Contributions and salient features of the results}

The linear quadratic mean-field model is a decentralized system with
non-classical information structure. \important{Our main result is to show
that linear control strategies are optimal}. The model is neither partially
nested nor quadratic invariant. In contrast, almost all positive results for
linear quadratic systems with non-classical information structures are for
models that are either partially nested or quadratic invariant. 

\important{We also show that the optimal control laws at each subsystem are
identical}. In general, identical control laws are \emph{not} optimal for
systems with exchangeable subsystems (see~\cite{AM:mft-cdc} for one an
example). So, a natural question is the following: Given a decentralized
system with exchangeable subsystems, under what conditions are optimal control
laws identical across subsystems. This question warrants further
investigation.

Our solution, and the solution complexity, do not depend on the number of
subsystems. Irrespective of the number of subsystems, the optimal control law
is given by the solution of \emph{two} (backward) Riccati equations in the
full observation model, and an additional filter Riccati equation in the noisy
observation model. The parameters of these Riccati equations do not depend on
the number of subsystems. Thus, \important{the optimal control laws can be computed
without any knowledge of the number of subsystems}.

Since the optimal control laws do not depend on the number of subsystems, our
results remain valid in the infinite population limit. In the infinite population
limit, the mean-field becomes a deterministic process that can be pre-computed
at all controllers. Thus, mean-field sharing is equivalent to the completely
decentralized information structure (where each controller only observes its
local state) considered in~\cite{Huang:2007,Huang:2007b}. Thus, \important{our
  results may be viewed as an alternative derivation of the infinite
population mean-field games solution}.

As an intermediate step of the proof, \important{we show that the
decentralized control laws---and, hence, the decentralized performance---are
same as centralized} (i.e., when all control actions are chosen by a single
controller that observes all the information available to all the
decentralized controllers). A natural follow up question that warrants further
investigation is the following: In general decentralized control problems,
when is decentralized performance the same as the centralized performance? Or
more generally, when is the performance under one information structure the
same as the performance under a finer information structure?

From an implementation point of view, the above feature has an interesting
consequence. If we have the freedom to design the information structure, then
in the linear-quadratic mean-field systems \important{there is no advantage of
sharing anything beyond the mean-field}. Since, it is easy to share the
mean-field using distributed consensus algorithms, it is relatively easy to
implement the optimal centralized solution in a distributed manner. 

Furthermore, as we already argued, in the infinite population limit,
mean-field sharing is equivalent to the completely decentralized setup (i.e.,
each controller only observes its own local state). Since, there is no
advantage of sharing any information beyond the mean-field---which is
already computable at each controller---the optimal completely
decentralized solution is also the optimal centralized solution. An immediate
corollary is that \important{the control laws obtained using the approach of mean-field
games are also the optimal centralized solution.}

\section{Problem formulation and the main results}

\subsection{Notation}

Uppercase letters denote random variables/vectors and lowercase letters denote
their realization. $\PR(\cdot)$ denotes
the probability of an event and $\EXP[\cdot]$ denotes the expectation of a
random variable. $\reals$ denotes the set of real numbers. 

For a sequence of (column) vectors $X$, $Y$, $Z$, \dots, the notation $\VVEC(X,Y,Z,\dots)$
denotes the vector $[X^\TRANS, Y^\TRANS, Z^\TRANS]^\TRANS$. The vector
$\VVEC(X_1, \dots, X_t)$ is also denoted by $X_{1:t}$. 

Superscripts index subsystems and subscripts index time. Thus, $X^i_t$ denotes
a variable at subsystem~$i$ at time~$t$. Bold letters denote collection of
variables at all subsystems. For example, $\VEC X$ denotes $(X^1, \dots,
X^n)$. $\MEAN X$ denotes the mean-field of $\VEC X$, i.e, $\MEAN
X = \frac 1n \sum_{i = 1}^n X^i$.

\subsection{Model and problem formulation} \label{sec:model}

Consider a decentralized control system with $n$ homogeneous subsystems that
operate for a finite horizon $T$. A controller is co-located with each
subsystem. Let $X^i_t \in \mathbb{R}^{d_x}$ denote the state of subsystem $i$
and $U^i_t \in \mathbb{R}^{d_u}$ denote the control action taken by
controller~$i$, $i \in \{1,\ldots,n\}$, at time $t$. Let $Z_t = \MEAN {X_t}$
denote the mean-field of the states at time~$t$.

\subsubsection{System dynamics}

The initial states $\{X^i_1\}\strut_{i = 1}^n$ are independent Gaussian random
variables with zero mean and covariance $\Sigma_X$.  They evolve in time as
follows:
\begin{equation}\label{eq:model}
  X^i_{t+1} = A_t X^i_t + B_t U^i_t + D_t Z_t + W^i_t,
  \quad  i \in \{1,\ldots,n\},
\end{equation}   
where $A_t$, $B_t$, and $D_t$ are matrices of appropriate dimensions and
$\{W^i_t\}\strut_{t = 1}^T$ is an i.i.d.\ noise process where $W^i_t$ is Gaussian with
zero mean and covariance $\Sigma_W$. We assume that the primitive random
variables $\{ X^i_1, W^i_1, \dots W^i_T \}\strut_{i = 1}^n$ are independent. 

Thus, the dynamics of the subsystems is coupled to the others through the
mean-field. 

\subsubsection{Per-step cost}
At time~$t$, the system incurs a cost that depends on the local state of the
subsystems and the mean-field as follows:
\begin{equation}
  c_t(\VEC X_t, \VEC U_t, Z_t) = 
  \frac 1n \sum_{i = 1}^n
  \Big[ \QUAD {X^i_t} {Q_t} + \QUAD {U^i_t} {R_t} \Big] + \QUAD {Z_t} {P_t} 
  \label{eq:cost}
\end{equation}
where  $Q_t$, $P_t$, $S_t$, and $R_t$ are matrices of appropriate dimension;
$Q_t$ and $P_t$ are symmetric and non-negative definite and $R_t$ is symmetric
and positive definite.

\begin{remark}
  The results derived in this paper also apply to a more general per-step cost
  of the form
  \[
    \frac 1n \sum_{i = 1}^n
    \Big[ \QUAD {X^i_t} {Q_t}  + {X^i}^\TRANS S_t Z_t + \QUAD {U^i_t} {R_t}
    \Big] + \QUAD {Z_t} {P_t}
  \]
  because it can be rewritten in the form~\eqref{eq:cost}.
\end{remark}

\subsubsection{Observation model and information structure}
We consider two observation models that differ in the observation of the local
state $X^i_t$ at controller~$i$. In the first model, called \emph{full
observation model}, controller~$i$ perfectly observes the local state $X^i_t$;
in the second model, called \emph{noisy observation model}, controller~$i$
observes a noisy version $Y^i_t \in \mathbb{R}^{d_y}$ of the local state
$X^i_t$ that is given by 
\begin{equation}\label{eq:noisy_observation}
  Y^i_t = C_t^x X^i_t + C_t^z Z_t + V^i_t, \quad  i \in \{1,\ldots,n\},
\end{equation}
where the observation noise $\{V^i_t\}_{t = 1}^T$ is i.i.d.\ Gaussian process
with zero mean and covariance $\Sigma_V$. In addition, we assume that the
noise processes are independent across subsystems and also independent of 
$\{ X^i_1, W^i_1, \dots W^i_T \}\strut_{i = 1}^n$.

In both models, in addition to the local measurement of the state of its
subsystem, each controller perfectly observes the mean-field $Z_t$.
Following~\cite{AM:mft-cdc}, we call this model \emph{mean-field sharing}
information structure.
Controllers perfectly recall all the data they observe. Thus, in the full
observation model, controller $i$ chooses control action according to
\begin{equation}\label{information structure:complete}
  U^i_t = g^i_t(X^i_{1:t},U^i_{1:t-1},Z_{1:t}),
\end{equation}
while in the noisy observation model, controller $i$ chooses control action according to 
\begin{equation}\label{information structure:incomplete}
  U^i_t = g^i_t(Y^i_{1:t},U^i_{1:t-1},Z_{1:t}).
\end{equation}
The function $g^i_t$ is called the \emph{control law} of controller~$i$. The
collection $\VEC g^i = (g^i_1,g^i_2,\ldots,g^i_T)$ is called the
\emph{control strategy} of controller~$i$. The collection
$\VEC{g} = (\VEC g^1,\ldots,\VEC g^n)$ of control strategies of all
controllers is called the \emph{control strategy of the system}.

\subsubsection{The optimization problem}
We are interested in the following optimization problem.
\begin{problem}\label{prob:main}
  In the model described above, find a joint strategy $\VEC{g} = (\VEC
  g^1,\ldots,\VEC g^n)$ that minimizes the following cost:
  \begin{equation}
    J(\VEC g) = \EXP^{\VEC g}\bigg[
      \sum_{t = 1}^T c_t(\VEC X_t, \VEC U_t, Z_t)
    \bigg],
  \end{equation}
  where the expectation is with respect to the measure induced on all system
  variables by the choice of the strategy $\VEC g$.
\end{problem}

\subsection{The main results}

\subsubsection{Full observation model}
For the full observation model, our main result is the following:

\begin{theorem}\label{Thm:main_complete}
  The optimal control  strategy of Problem~\ref{prob:main} is unique and given
  by
  \begin{equation}\label{eq:optimal_control_law_with_riccati}
    {U^i_t} = K^x_t X^i_t + (K^z_t-K^x_t) Z_t, \quad i \in \{1,\ldots,n\}.
  \end{equation}
  The optimal gains are given as follows. For the terminal time-step
  $K^x_T = K^z_T = 0_{d_u \times d_x}$ and for $t \leq T-1$,
  \begin{equation}
    K^x_t = -\left( B_t^\TRANS M^x_{t+1} B_t + R_t\right)^{-1}B_t^\TRANS M^x_{t+1} A_t,
  \end{equation}
  and 
  \begin{equation}
    K^z_t = -\left( B_t^\TRANS M^z_{t+1} B_t + R_t \right)^{-1}  B_t^\TRANS M^z_{t+1} \bar A_t
  \end{equation}
  where $\bar A_t\coloneqq A_t + D_t$ and
  $\{M^x_t\}_{t = 1}^T$ and $\{M^z_t\}_{t = 1}^T$ are solutions of the following
  two \emph{decoupled} Riccati equations:
  \begin{equation*}
    M^x_T = Q_T, \quad M^z_t = Q_T + P_T,
  \end{equation*}
  and for $t = T-1,\ldots,1$,
  \begin{align}
      M^x_t& = - A_t^\TRANS M^x_{t+1} B_t\left(B_t^\TRANS M^x_{t+1} B_t + R_t \right)^{-1} B_t^\TRANS M^x_{t+1} A_t
      \notag \\
      &\quad + A_t^\TRANS M^x_{t+1} A_t + Q_t, 
      \label{Riccati_k}
  \end{align}
  and
  \begin{align}
    M^z_t& = - {\bar A_t}^\TRANS M^z_{t+1} B_t\left(B_t^\TRANS M^z_{t+1} B_t + R_t \right)^{-1} B_t^\TRANS M^z_{t+1} \bar A_t
    \notag \\
    &\quad + \bar A_t^\TRANS M^z_{t+1} \bar A_t + Q_t + P_t. 
    \label{Riccati_s}
  \end{align}
\end{theorem}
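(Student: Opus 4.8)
The plan is to change coordinates so that Problem~\ref{prob:main} splits into $n+1$ \emph{decoupled} standard (centralized) LQ problems---one per subsystem for the deviation of the state from the mean-field, plus one for the mean-field itself---to lower bound each by completion of squares, and then to verify that the control law~\eqref{eq:optimal_control_law_with_riccati} simultaneously attains all the lower bounds while being implementable with the mean-field sharing information. Concretely, introduce the deviations $\breve X^i_t \coloneqq X^i_t - Z_t$ and $\breve U^i_t \coloneqq U^i_t - \MEAN{U_t}$, so that $\sum_{i=1}^n \breve X^i_t = 0$ and $\sum_{i=1}^n \breve U^i_t = 0$ identically. Averaging~\eqref{eq:model} over $i$ gives the mean-field dynamics $Z_{t+1} = \bar A_t Z_t + B_t \MEAN{U_t} + \MEAN{W_t}$, and subtracting this from~\eqref{eq:model} gives the deviation dynamics $\breve X^i_{t+1} = A_t \breve X^i_t + B_t \breve U^i_t + (W^i_t - \MEAN{W_t})$; these $n+1$ linear systems are mutually decoupled. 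Substituting $X^i_t = \breve X^i_t + Z_t$ and $U^i_t = \breve U^i_t + \MEAN{U_t}$ into~\eqref{eq:cost} and using the zero-sum identities to kill the cross terms gives the separable form
\begin{equation*}
  c_t = \frac1n\sum_{i=1}^n\Big[\QUAD{\breve X^i_t}{Q_t} + \QUAD{\breve U^i_t}{R_t}\Big] + \QUAD{Z_t}{(Q_t + P_t)} + \QUAD{\MEAN{U_t}}{R_t}.
\end{equation*}
Hence $J(\VEC g)$ is the sum of $n$ deviation costs and one mean-field cost, coupled only through the linear constraints $\sum_i \breve U^i_t = 0$.

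For the lower bounds I would use the standard completion-of-squares identity: for a controlled linear system $s_{t+1} = F_t s_t + G_t a_t + \xi_t$ with cost $\sum_{t=1}^T(\QUAD{s_t}{\hat Q_t} + \QUAD{a_t}{\hat R_t})$, $\hat R_t \succ 0$, one has
\begin{equation*}
  \sum_{t=1}^T\big(\QUAD{s_t}{\hat Q_t} + \QUAD{a_t}{\hat R_t}\big) = \QUAD{s_1}{N_1} + \sum_{t=1}^T (a_t - L_t s_t)^\TRANS \Lambda_t (a_t - L_t s_t) + \Xi,
\end{equation*}
where $N_t$ solves the corresponding Riccati recursion with $N_{T+1} = 0$, $\Lambda_t = G_t^\TRANS N_{t+1} G_t + \hat R_t \succ 0$, $L_t = -\Lambda_t^{-1} G_t^\TRANS N_{t+1} F_t$, and $\Xi$ collects the noise terms $\sum_t[2(F_t s_t + G_t a_t)^\TRANS N_{t+1}\xi_t + \QUAD{\xi_t}{N_{t+1}}]$. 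If $a_t$ is a function of data independent of $\xi_t$, then $\EXP[\Xi]$ is a strategy-independent constant, so $\EXP[\sum_t(\QUAD{s_t}{\hat Q_t}+\QUAD{a_t}{\hat R_t})] \ge \EXP[\QUAD{s_1}{N_1}] + \EXP[\Xi]$, with equality iff $a_t = L_t s_t$ a.s.\ for all $t$. Applying this to each deviation system ($s = \breve X^i$, $a = \breve U^i$, $F_t = A_t$, $\hat Q_t = Q_t$, $\hat R_t = R_t$) makes the Riccati recursion equal to~\eqref{Riccati_k}, $L_t = K^x_t$; the independence requirement holds because, by~\eqref{information structure:complete}, $U^i_t$---hence $\breve U^i_t$ and $\breve X^i_t$---is a function of $(X^i_{1:t}, U^i_{1:t-1}, Z_{1:t})$, which is determined by the primitive variables only up to time $t-1$ and is therefore independent of $\{W^j_t\}_{j=1}^n$, hence of $W^i_t - \MEAN{W_t}$. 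Applying it to the mean-field system ($s = Z$, $a = \MEAN{U_t}$, $F_t = \bar A_t$, $\hat Q_t = Q_t + P_t$, $\hat R_t = R_t$) makes the Riccati recursion equal to~\eqref{Riccati_s}, $L_t = K^z_t$, and $\MEAN{U_t}, Z_t$ are likewise independent of $\MEAN{W_t}$.

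Combining, $J(\VEC g) \ge \kappa$ for a constant $\kappa$ independent of $\VEC g$, with equality iff $\breve U^i_t = K^x_t \breve X^i_t$ for all $i$ and $\MEAN{U_t} = K^z_t Z_t$, a.s., for all $t$. The law~\eqref{eq:optimal_control_law_with_riccati} is a legitimate mean-field-sharing strategy (it uses only $X^i_t$ and $Z_t$); averaging it gives $\MEAN{U_t} = K^x_t Z_t + (K^z_t - K^x_t) Z_t = K^z_t Z_t$, whence $\breve U^i_t = U^i_t - \MEAN{U_t} = K^x_t(X^i_t - Z_t) = K^x_t \breve X^i_t$, so it attains both families of equalities at once; in particular it respects the constraint $\sum_i \breve U^i_t = K^x_t \sum_i \breve X^i_t = 0$, so the $n+1$ subproblems are solved jointly with no conflict. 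Uniqueness follows because $\Lambda^x_t, \Lambda^z_t \succ 0$ (since $R_t \succ 0$): any minimizer must make all the quadratic forms in the identity vanish a.s., which forces the displayed equalities, and since $X^i_t$ and $Z_t$ are part of controller~$i$'s information this pins down $U^i_t$---hence $g^i_t$---uniquely. (As a byproduct $\kappa$ is also the value of the centralized problem, achieved by the same linear law, giving the centralized/decentralized equivalence mentioned in the introduction.) The step I expect to be the only non-routine one is this last attainability argument---that the decentralized information structure does not obstruct simultaneous optimality of the $n+1$ decoupled problems---which works precisely because every controller observes the mean-field and because the zero-sum constraints are automatically consistent with the per-subsystem optimal feedback; one also has to be slightly careful, in the lower-bound step, to verify the whiteness property that makes the noise cross-terms drop out in expectation.
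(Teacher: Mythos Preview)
Your proposal is correct and follows essentially the same route as the paper: the same change of coordinates $\breve X^i_t = X^i_t - Z_t$, $\breve U^i_t = U^i_t - \MEAN{U_t}$, the same cost decomposition (the paper's Lemma~\ref{lem:key} and Corollary~\ref{cor:cost}), and the same decoupling into $n+1$ independent LQ subproblems whose Riccati recursions are exactly~\eqref{Riccati_k} and~\eqref{Riccati_s}. The only difference is packaging: the paper introduces an auxiliary \emph{centralized} system, invokes certainty equivalence to pass to the deterministic problem, and then argues that the centralized optimum is implementable under mean-field sharing, whereas you bypass the auxiliary system and apply completion-of-squares directly to the decentralized cost; both arguments hinge on the same attainability check $U^i_t = K^x_t(X^i_t - Z_t) + K^z_t Z_t$ and yield the same uniqueness conclusion. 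One small wording slip: $\breve U^i_t = U^i_t - \MEAN{U_t}$ is \emph{not} a function of controller~$i$'s information alone (it depends on all $U^j_t$), but your actual claim---that it is measurable with respect to the primitive variables $\{X^j_1, W^j_{1:t-1}\}_{j=1}^n$ and hence independent of $\{W^j_t\}_j$---is correct and is all that the completion-of-squares step needs.
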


The above result may also be derived under the weaker assumption that the
initial state $\VEC X_1 = (X^1_1, \dots, X^n_1)$ as well as the noise $\VEC
W_t = (W^1_t, \dots, W^n_t)$ are correlated across the subsystems. 

\subsubsection{Noisy observation model}
For the noisy observation model, our main result is the following:
\begin{theorem}\label{Thm:KF}
  For the noisy observation model, the optimal control strategy for
  Problem~\ref{prob:main} is unique and is given by
  \begin{equation}
    U^i_t = K^x_t \hat X^i_t + (K^z_t-K^x_t) Z_t, \quad i \in \{1,\ldots,n\},
  \end{equation}
  where optimal gains $K^x_t$ and $K^z_t$ are the same as in
  Theorem~\ref{Thm:main_complete} and  $\hat
  X^i_t = \EXP[X^i_t \mid Y^i_{1:t},Z_{1:t},U^i_{1:t-1}]$ which is generated by the
  following Kalman filtering equations: 
  \(
    \hat X^i_1 = 0
  \)
  and for $t > 1$,  
  \begin{equation}
    \hat X^i_{t+1} = A_t \hat X^i_t + B_t U^i_t + K_t^F(Y^i_t -C^x_t \hat X^i_t -C_t^z Z_t)
  \end{equation}
  where the Kalman filter gains are given by
  \begin{equation}
    K^F_t = A_t S_t {C^x_t}^\TRANS (C^x_t S_t {C^x_t}^\TRANS + \Sigma_V)^{-1},
  \end{equation}
  and the state estimation error covariances satisfy the (filter) Riccati
  equation: $S_1 = \Sigma_X$ and for $t > 1$, 
  \begin{align}
    S_{t+1} &= -A_t S_t {C^x_t}^\TRANS (C^x_t S_t {C^x_t}^\TRANS + \Sigma_V)^{-1}C^x_t S_t A_t^\TRANS
    \notag \\
    & \quad + A_t S_{t} A_t^\TRANS + \Sigma_W.
  \end{align}
\end{theorem}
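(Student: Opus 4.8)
The plan is to establish a separation (certainty‑equivalence) principle: show that the optimal noisy‑observation strategy is the full‑observation optimal strategy of Theorem~\ref{Thm:main_complete} with each true local state $X^i_t$ replaced by a suitable conditional estimate $\hat X^i_t$, and that this estimate is generated by the stated local Kalman filter. Concretely, I would (i)~set up the estimator and its error process and show the error statistics are unaffected by the control strategy; (ii)~expand the quadratic cost to exhibit it as a term of exactly the form of the cost~\eqref{eq:cost} driven by the estimates, plus a strategy‑independent estimation cost; and (iii)~invoke Theorem~\ref{Thm:main_complete} on the estimated‑state system to read off the optimal gains and the uniqueness.

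For step~(i): fix any admissible $\VEC g$. Controller~$i$'s information $(Y^i_{1:t},Z_{1:t},U^i_{1:t-1})$ contains $Z_{1:t}$ and $U^i_{1:t-1}$, so in~\eqref{eq:model} the term $D_tZ_t+B_tU^i_t$ is a known input and in~\eqref{eq:noisy_observation} the term $C^z_tZ_t$ is a known offset. I would verify that the conditional mean $\hat X^i_t = \EXP[X^i_t\mid Y^i_{1:t},Z_{1:t},U^i_{1:t-1}]$ is propagated by the stated predictor--corrector recursion, that the innovation $\nu^i_t \coloneqq Y^i_t - C^x_t\hat X^i_t - C^z_tZ_t = C^x_t(X^i_t-\hat X^i_t)+V^i_t$ is a zero‑mean Gaussian white sequence with covariance $C^x_tS_t{C^x_t}^\TRANS+\Sigma_V$ that is independent across subsystems, and---most importantly---that the error $\tilde X^i_t \coloneqq X^i_t - \hat X^i_t$ is zero‑mean Gaussian with covariance $S_t$ (obeying the stated filter Riccati equation) and is orthogonal to controller~$i$'s information, hence to $\hat X^i_t$, to all controls chosen so far, and to $\tilde X^j_t$ for $j\neq i$. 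The orthogonality of the error to the information is the \emph{no dual effect of control} property; it holds because every input entering any subsystem's dynamics is measurable with respect to that subsystem's information, so the error recursion does not involve the control.

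For step~(ii): writing $X^i_t = \hat X^i_t + \tilde X^i_t$ and substituting into~\eqref{eq:cost}, the orthogonality from step~(i) kills all expected cross terms between $\tilde X^i_t$ and $\hat X^i_t$, $U^i_t$, $Z_t$, or $\tilde X^j_t$, leaving $J(\VEC g) = \hat J(\VEC g) + c_0$, where $\hat J$ has the same form as $J$ but with $X^i_t$ replaced by $\hat X^i_t$ (and the mean‑field in the $P_t$‑term by $\tfrac1n\sum_i\hat X^i_t$), and the constant $c_0$ (collecting $\sum_t\operatorname{tr}(Q_tS_t)$ and the analogous terms from the $P_t$‑cost) does not depend on $\VEC g$. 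The corrector recursion $\hat X^i_{t+1} = A_t\hat X^i_t + B_tU^i_t + D_tZ_t + K^F_t\nu^i_t$ shows the estimated states form a linear‑Gaussian system of the same mean‑field‑coupled structure as~\eqref{eq:model}, driven by the white noise $K^F_t\nu^i_t$, in which controller~$i$ has the full‑observation information structure~\eqref{information structure:complete}. Applying Theorem~\ref{Thm:main_complete} (allowing, if needed, an exogenous strategy‑independent additive signal in the coupling term, which only shifts the cost by a constant and leaves the gains unchanged) then yields the unique optimal rule $U^i_t = K^x_t\hat X^i_t + (K^z_t-K^x_t)Z_t$ with the same gains $K^x_t,K^z_t$; since $c_0$ is independent of $\VEC g$, this is also the unique minimizer of $J$.

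The hard part is the combination of decentralized filtering with the shared mean‑field in step~(i): one must show that the local recursion really does produce the conditional expectation and that the error covariance $S_t$---and the innovation correlations---do not depend on $n$, despite $Z_t$ being simultaneously a common input to every subsystem and a function of all the states. I expect the cleanest route is to work in the mean‑field/deviation coordinates $Z_t$ and $\breve X^i_t \coloneqq X^i_t - Z_t$ used in the proof of Theorem~\ref{Thm:main_complete}, exploiting the orthogonality of the average $Z_t$ to the deviations $\breve X^i_t$ and the conditional independence of distinct subsystems given the common information, and---if necessary---first proving the claim within the class of symmetric strategies and then removing the restriction using the uniqueness already supplied by Theorem~\ref{Thm:main_complete}.
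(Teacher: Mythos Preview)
Your main route in (i)--(iii) runs into a real obstacle at the step where you invoke Theorem~\ref{Thm:main_complete} on the estimated-state system. The estimator recursion you write is $\hat X^i_{t+1}=A_t\hat X^i_t+B_tU^i_t+D_tZ_t+K^F_t\nu^i_t$, and the coupling term here is the \emph{true} mean-field $Z_t=\langle\VEC X_t\rangle$, not $\langle\hat{\VEC X}_t\rangle$; these do not coincide in general (each $\tilde X^i_t$ is orthogonal to controller~$i$'s own information, but $\sum_i\tilde X^i_t$ need not vanish because the conditioning $\sigma$-fields differ across~$i$). Moreover, controller~$i$ observes $Z_t$ but \emph{not} $\langle\hat{\VEC X}_t\rangle$. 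Hence the estimated-state system is not an instance of the model of Section~\ref{sec:model}, and Theorem~\ref{Thm:main_complete} does not apply as a black box. Your proposed patch---absorb the mismatch into an ``exogenous strategy-independent'' signal---fails because $Z_t$ depends on all past controls of all subsystems.

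The fallback you sketch (pass to the coordinates $(Z_t,\,X^i_t-Z_t)$ and exploit conditional independence across subsystems given the common information) is precisely the paper's route, but the paper orders the steps differently and thereby sidesteps the obstacle. It never tries to reduce to Theorem~\ref{Thm:main_complete}. Instead it stays in the \emph{centralized} auxiliary system already built for Theorem~\ref{Thm:main_complete}, applies the classical LQG separation theorem there---yielding $\bar U^i_t=K^x_t\,\EXP[\bar X^i_t\mid\VEC Y_{1:t},\tilde U_{1:t-1},Z_{1:t}]$ and $U^z_t=K^z_tZ_t$---and then proves a short lemma: by conditional independence of $(X^i_{1:t},Y^i_{1:t})$ across~$i$ given the common data, the centralized estimate $\EXP[X^i_t\mid\VEC Y_{1:t},\tilde U_{1:t-1},Z_{1:t}]$ collapses to the local $\hat X^i_t$. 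This ordering avoids ever needing the estimated system to ``look like'' a mean-field model, gives the decentralized-equals-centralized conclusion for free, and makes your anticipated ``hard part'' a one-line conditional-independence argument rather than a detour through symmetric strategies.
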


\section {Proof of the main results}

The main idea of the proof is as follows. We construct an auxiliary system
whose state, control actions, and per-step cost are  equivalent to $\VEC X_t$,
$\VEC U_t$, and $c_t(\cdot)$, respectively (modulo a change of variables to be
described later). However, this auxiliary system is centrally controlled by a
single control that has access to all the information available to the
$n$~decentralized controllers in the original system. We show that the optimal
centralized solution of this auxiliary system can be implemented in the
original decentralized system, and is therefore also optimal for the
decentralized system.

\subsection{The auxiliary system}

Define $\bar X^i_t = X^i_t - Z_t$ and $\bar U^i_t = U^i_t - U^z_t$, where
$U^z_t = \MEAN {U_t}$. The auxiliary system is a centralized system with state
$\tilde X_t = \VVEC(\bar X^1_t, \dots, \bar X^n_t, Z_t)$ and actions $\tilde
U_t = \VVEC(\bar U^1_t, \dots, \bar U^n_t, U^z_t)$. Note that $\tilde X_t$ is
equivalent to $\VEC X_t$ and $\tilde U_t$ is equivalent to $\VEC U_t$.

The dynamics are the same as the model in Sec.~\ref{sec:model}. In particular, 
\begin{equation}\label{auxiliary-state-eq}
  \bar{X}^i_{t+1} = A_t\bar {X}^i_t + B_t\bar U^i_t + W^i_t - \MEAN {W_t}
\end{equation}
and
\begin{equation}\label{auxiliary-mf-eq}
  Z_{t+1} = (A_t + D_t)Z_t + B_tU^z_t + \MEAN {W_t}
\end{equation}

The per-step cost of the auxiliary model is given by $c_t(\VEC X_t, \VEC U_t,
Z_t)$ defined in~\eqref{eq:cost}.

As in Sec.~\ref{sec:model}, we consider two observation models for the
auxiliary system: full observation and noisy observation. In both cases, there
is a \emph{single centralized controller} that chooses $\tilde U_t$ based on
the observations. 

In the full observation model, the centralized controller observes $\tilde
X_t$ and chooses $\tilde U_t$ according to 
\[
 \tilde U_t = \bar g_t(\tilde X_{1:t}, \tilde U_{1:t-1}).
\]

In the noisy observation model, the centralized controller observes $\VEC Y_t
= \VVEC(Y^1_t, \dots, Y^N_t)$ where $Y^i_t$ is given by
\eqref{eq:noisy_observation} and chooses $\tilde U_t$ according to
\[
\tilde U_t = \bar g_t(\VEC Y_{1:t}, \tilde U_{1:t-1}, Z_{1:t}).
\]

In both models, we are interested in the following optimization problem:
\begin{problem}\label{prob:aux}
  In the model described above, find a strategy $\bar {\VEC g} = (\bar g_1,
  \dots, \bar g_T)$ that minimizes the following cost:
  \begin{equation}
    \bar{ J}(\bar{\VEC g}) = \EXP^{\bar{\VEC g}} 
    \Big[ \sum_{t = 1}^T c_t(\VEC X_t, \VEC U_t, Z_t) \Big],
  \end{equation}
  where the expectation is with respect to the measure induced on all system
  variables by the choice of strategy $\bar {\VEC g}$.
\end{problem}

Let $J^*$ and $\bar J^*$ denote the optimal cost for Problem~\ref{prob:main}
and Problem~\ref{prob:aux} respectively. Since the per-step cost is the same
in both cases, but Problem~\ref{prob:aux} is centralized, we have that
\(
  J^* \ge \bar J^*.
\)
We identify the optimal control laws for the auxiliary system and show that
these laws can be implemented in, and therefore are optimal for, the original
decentralized system. 

A critical step in the proof is to rewrite the cost $c_t(\VEC X_t, \VEC U_t,
Z_t)$ in terms of $\tilde X_t$ and $\tilde U_t$. For that matter, we need the
following key result.

\begin{lemma} \label{lem:key}
  For any $\VEC x = \VVEC(x^1, \dots, x^n)$ and $z = \MEAN x$, let $\bar x^i =
  x^i - z$, $i \in \{1, \dots, n\}$. Then, for any matrix $Q$ of appropriate
  dimension,
  \begin{equation}
    \frac 1n \sum_{i = 1}^n \QUAD {x^i} Q 
    =
    \frac 1n \sum_{i = 1}^n \QUAD {\bar{x}^i} Q + \QUAD z Q
  \end{equation}
\end{lemma}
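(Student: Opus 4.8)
The plan is to prove the lemma by a direct bias--variance--type decomposition of each quadratic form. I would start from the defining identity $x^i = \bar x^i + z$ and substitute it into $\QUAD{x^i}{Q}$, expanding to obtain the four terms $\QUAD{\bar x^i}{Q} + (\bar x^i)^\TRANS Q z + z^\TRANS Q \bar x^i + \QUAD z Q$.

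Next I would average this identity over $i = 1, \dots, n$ and argue that the two cross terms drop out. The only fact needed is that the centered vectors average to zero: $\frac1n\sum_{i=1}^n \bar x^i = \frac1n\sum_{i=1}^n (x^i - z) = \MEAN x - z = 0$, since $z = \MEAN x$ by assumption. Because $z$ and $Q$ do not depend on $i$, the average of $(\bar x^i)^\TRANS Q z$ equals $\bigl(\frac1n\sum_i \bar x^i\bigr)^\TRANS Q z = 0$, and likewise for the term $z^\TRANS Q \bar x^i$; the average of the $i$-independent term $\QUAD z Q$ is just $\QUAD z Q$ itself. What remains is exactly $\frac1n\sum_i \QUAD{\bar x^i}{Q} + \QUAD z Q$, which is the claimed identity.

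I do not expect any genuine obstacle: the statement is an elementary linear-algebra identity, valid for an arbitrary matrix $Q$ of the appropriate dimension---neither symmetry nor nonnegative-definiteness of $Q$ is used. The only things to watch are the bookkeeping with the transposes in the two cross terms and making the ``pull the average inside'' step explicit. It is, however, worth recording why the lemma is isolated here: applying it with $(x^i,Q) = (X^i_t, Q_t)$ and then with $(x^i,Q) = (U^i_t, R_t)$ rewrites the per-step cost $c_t(\VEC X_t, \VEC U_t, Z_t)$ purely in terms of the auxiliary variables $\bar X^i_t$, $\bar U^i_t$, and $Z_t$, which is precisely what is needed to establish the equivalence with the centralized Problem~\ref{prob:aux}.
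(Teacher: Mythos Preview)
Your proposal is correct and follows exactly the approach the paper indicates: the paper's proof simply says the result follows from elementary algebra together with the observation that $\sum_{i=1}^n \bar x^i = 0$, and your expansion $x^i = \bar x^i + z$ with the cross terms vanishing under averaging is precisely that elementary algebra spelled out.
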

\begin{proof}
  The result follows from elementary algebra and the observation that
  $\sum_{i=1}^n \bar x^i = 0$. 
\end{proof}

An immediate consequence of Lemma~\ref{lem:key} is the following:
\begin{corollary} \label{cor:cost}
  For any time~$t$, $c_t(\VEC X_t, \VEC U_t, Z_t) = \bar c_t(\tilde X_t,
  \tilde U_t)$, where 
  \begin{multline} \label{eq:aux-cost}
    \bar c_t(\tilde X_t, \tilde U_t) = 
    \frac 1n \sum_{i = 1}^n 
    \Big[
      \QUAD {\bar{X}^i_t} {Q_t} + \QUAD {\bar{U}^i_t} {R_t} 
    \Big] \\
    + \QUAD {Z_t} {(Q_t + P_t)} 
    + \QUAD {U^z_t} {R_t}.
  \end{multline}
\end{corollary}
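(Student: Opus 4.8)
The plan is to apply Lemma~\ref{lem:key} twice to the per-step cost~\eqref{eq:cost}: once to the state term and once to the control term. First I would invoke the lemma with $\VEC x = \VEC X_t$ and $Q = Q_t$; since $z = \MEAN{X_t} = Z_t$ and the centered variable $\bar x^i$ is precisely $X^i_t - Z_t = \bar X^i_t$, this yields
\[
  \frac 1n \sum_{i=1}^n \QUAD{X^i_t}{Q_t}
  = \frac 1n \sum_{i=1}^n \QUAD{\bar X^i_t}{Q_t} + \QUAD{Z_t}{Q_t}.
\]
Then I would apply the lemma a second time with $\VEC x = \VEC U_t$ and $Q = R_t$; now $z = \MEAN{U_t} = U^z_t$ and the centered variable is $U^i_t - U^z_t = \bar U^i_t$, giving
\[
  \frac 1n \sum_{i=1}^n \QUAD{U^i_t}{R_t}
  = \frac 1n \sum_{i=1}^n \QUAD{\bar U^i_t}{R_t} + \QUAD{U^z_t}{R_t}.
\]

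Next I would substitute both identities into the definition~\eqref{eq:cost} of $c_t(\VEC X_t, \VEC U_t, Z_t)$ and collect the two resulting quadratic-in-$Z_t$ terms, using $\QUAD{Z_t}{Q_t} + \QUAD{Z_t}{P_t} = \QUAD{Z_t}{(Q_t+P_t)}$. What remains is exactly the right-hand side of~\eqref{eq:aux-cost}. Finally I would remark that every quantity occurring there---namely $\bar X^i_t$, $\bar U^i_t$, $Z_t$, and $U^z_t$---is a block of $\tilde X_t = \VVEC(\bar X^1_t,\dots,\bar X^n_t,Z_t)$ or of $\tilde U_t = \VVEC(\bar U^1_t,\dots,\bar U^n_t,U^z_t)$, so the expression is genuinely a function $\bar c_t(\tilde X_t, \tilde U_t)$, as claimed.

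I do not anticipate a real obstacle here: the argument is just a two-fold application of the already-established Lemma~\ref{lem:key} followed by regrouping terms. The only point deserving (minor) care is that the two applications use different empirical averages---$Z_t = \MEAN{X_t}$ when centering the states and $U^z_t = \MEAN{U_t}$ when centering the controls---so that the symbol $\bar x^i$ in the lemma denotes $\bar X^i_t$ in the first instance and $\bar U^i_t$ in the second. Everything else is elementary algebra of quadratic forms.
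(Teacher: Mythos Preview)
Your proposal is correct and matches the paper's own argument: the corollary is stated there as an immediate consequence of Lemma~\ref{lem:key}, and your two applications of that lemma (to the $Q_t$-weighted state average and the $R_t$-weighted control average) followed by regrouping the $Z_t$ terms is exactly how that consequence is obtained.
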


Note that the auxiliary model has linear dynamics and in
Corollary~\ref{cor:cost} we have shown that the cost is quadratic in the state
and the control actions. Thus, the optimal control actions are linear in the
state and the corresponding optimal gains can be obtained by solving an
appropriate Riccati equation. However, the state $\tilde X_t$ of the
auxiliary system belongs to $\reals^{(n+1)d_x}$, thus, a naive attempt to
obtain an optimal solution will involve solving for $(n+1)d_x \times
(n+1)d_x$ dimensional Riccati equations. We present an alternative approach
in the next section that involves solving two $d_x \times d_x$ dimensional
Riccati equations (independent of~$n$).

\subsection {Full observation model}

The auxiliary system is a stochastic linear quadratic system. From the
certainty equivalence principle~\cite{Caines:1987}, we know that the
optimal control law is unique and identical to the control law in the
corresponding deterministic problem, whose dynamics are given by
\begin{align}
  \bar{X}^i_{t+1} &= A_t\bar {X}^i_t + B_t\bar U^i_t,
  \qquad  i \in \{1,\ldots,n\}, \\
  Z_{t+1} &= (A_t + D_t)Z_t + B_tU^z_t.
\end{align}
and the per-step cost is $\bar c(\tilde X_t, \tilde U_t)$ given
by~\eqref{eq:aux-cost}. 

Note that this system consists on $(n+1)$ components: $n$ components with
state $\bar X^i_t$ and control $\bar U^i_t$, $i \in \{1, \dots, n\}$, and one
component with state $Z_t$ and control $U^z_t$. These components have
decoupled dynamics and decoupled cost; and $n$ of these are identical. Thus,
the optimal control law of each component may be identified separately.
Therefore, we have the following:
\begin{theorem}\label{Thm:main_complete-auxiliary}
  The optimal control strategy of auxiliary model is unique and given by
  \begin{equation}\label{eq:optimal_control_law_with_riccati_aux}
    \bar U^i_t = K^x_t \bar X^i_t, \quad U^z_t = K^z_t Z_t,\quad i \in \{1,\ldots,N\},
  \end{equation}
  where the gains $K^x_t$ and $K^z_t$ are given as in
  Theorem~\ref{Thm:main_complete}.
\end{theorem}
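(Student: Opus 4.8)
The plan is to combine the certainty equivalence principle (already invoked above) with the fact that, in the resulting deterministic problem, both the dynamics and the cost $\bar c_t$ of Corollary~\ref{cor:cost} split into $n+1$ blocks that do not interact.

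First, by certainty equivalence for stochastic LQ systems it suffices to solve the deterministic LQ problem with dynamics $\bar X^i_{t+1} = A_t \bar X^i_t + B_t \bar U^i_t$ and $Z_{t+1} = \bar A_t Z_t + B_t U^z_t$ and per-step cost $\bar c_t(\tilde X_t, \tilde U_t)$; the optimal strategy of the stochastic problem is then the same linear feedback law evaluated at the observed state $\tilde X_t$, and it is unique because $R_t \succ 0$. This reduction also disposes of the only coupling between blocks present in the stochastic model, namely the correlation between the noises $W^i_t - \MEAN{W_t}$ and $\MEAN{W_t}$, since the deterministic problem is noise-free.

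Next I would show that the deterministic problem decouples. By \eqref{eq:aux-cost}, $\sum_{t=1}^T \bar c_t$ is the sum of, for each $i$, the cost $\frac1n \sum_{t=1}^T [\QUAD{\bar X^i_t}{Q_t} + \QUAD{\bar U^i_t}{R_t}]$, which depends only on block~$i$ (whose dynamics also involve only block~$i$), plus the cost $\sum_{t=1}^T [\QUAD{Z_t}{(Q_t+P_t)} + \QUAD{U^z_t}{R_t}]$, which depends only on the $Z$-block. Running backward dynamic programming, the value function keeps the form $V_t(\tilde X_t) = \frac1n \sum_i \QUAD{\bar X^i_t}{M^x_t} + \QUAD{Z_t}{M^z_t}$, with no cross, linear, or constant terms, so the Bellman minimization over $\tilde U_t$ separates into $n$ identical scalar-weighted LQR problems for $(\bar X^i, \bar U^i)$ and one LQR problem for $(Z, U^z)$; the constant $1/n$ does not change the minimizers. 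Standard finite-horizon LQR then gives $\bar U^i_t = K^x_t \bar X^i_t$ with $K^x_t, M^x_t$ solving \eqref{Riccati_k} (weights $Q_t, R_t$, terminal $Q_T$) and $U^z_t = K^z_t Z_t$ with $\bar A_t$ replacing $A_t$ and weights $Q_t + P_t, R_t$, i.e. \eqref{Riccati_s} and the gain formulas of Theorem~\ref{Thm:main_complete}, together with $K^x_T = K^z_T = 0$.

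One subtlety to address: the auxiliary state lives in the subspace $\{\sum_i \bar x^i = 0\}$ and the controls in $\{\sum_i \bar u^i = 0\}$, so the $n$ deviation blocks are not literally independent. But the feedback $\bar U^i_t = K^x_t \bar X^i_t$ obtained from the unconstrained blockwise LQR automatically satisfies $\sum_i \bar U^i_t = K^x_t \sum_i \bar X^i_t$, and, by induction on \eqref{auxiliary-state-eq} using $\sum_i(W^i_t - \MEAN{W_t}) = 0$, the constraint $\sum_i \bar X^i_t = 0$ persists along every trajectory; hence the unconstrained optimum is feasible, and therefore optimal and unique, for the constrained auxiliary problem. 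I expect no real obstacle beyond the routine verification that the separated Bellman recursions are exactly \eqref{Riccati_k}--\eqref{Riccati_s}, and the care needed to state the certainty-equivalence reduction for a state confined to a subspace.
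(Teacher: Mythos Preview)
Your proposal is correct and follows essentially the same route as the paper: invoke certainty equivalence to pass to the deterministic LQ problem, observe that both the dynamics and the cost $\bar c_t$ split into $n$ identical $(\bar X^i,\bar U^i)$ blocks and one $(Z,U^z)$ block, and solve each by standard finite-horizon LQR to obtain \eqref{Riccati_k}--\eqref{Riccati_s}. Your treatment of the subspace constraint $\sum_i \bar X^i_t = 0$, $\sum_i \bar U^i_t = 0$---showing that the unconstrained blockwise optimum is automatically feasible and hence optimal on the constrained set---is a point the paper does not address at all, and is a genuine (if minor) improvement in rigor.
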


To complete the proof of Theorem~\ref{Thm:main_complete}, note that
\[
  U^i_t = \bar U^i_t + U^z_t = K^x(X^i_t - Z_t) + K^z Z_t.
\]
Thus, the control laws specified in Theorem~\ref{Thm:main_complete} are the
optimal \emph{centralized} control laws, and, a fortrori, the optimal
decentralized control laws. 

\subsection {Noisy observation model}

\begin{figure*}
  \centering
  \includegraphics[width=0.8\textwidth]{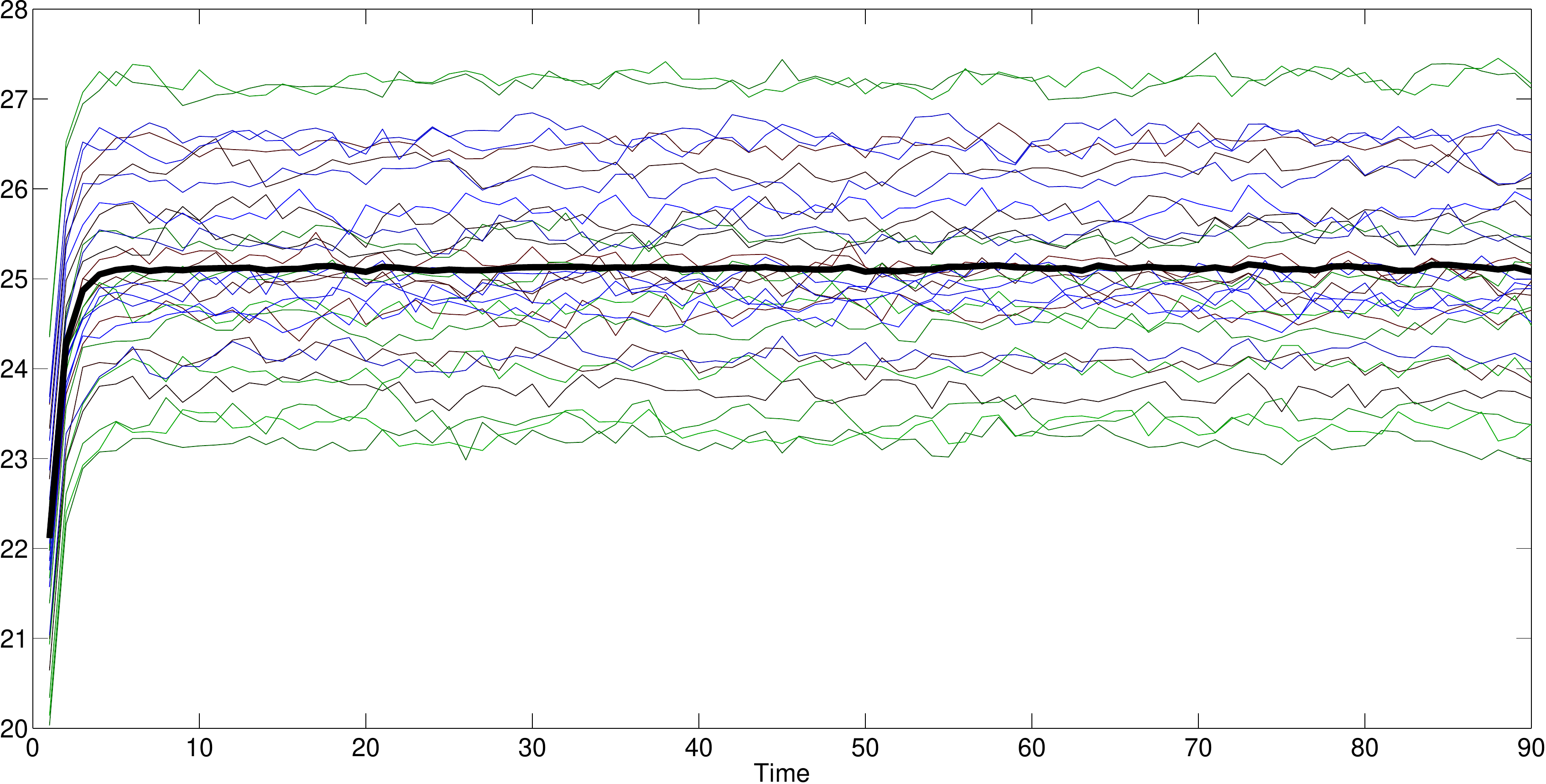}
  \caption{An example of mean-field control of space heaters. The light lines
  show the temperature of individual heaters and the thick line depicts the
mean-field of the trajectories. Note that the individual trajectories adjust
around their initial values so that the mean-field tracks the reference
trajectory $z_{ref} = 25$. The tracking is not perfect because of the
friction between tracking the initial states locally and tracking the
mean-field globally.}
  \label{fig:plot}
\end{figure*}

Define
\begin{align*}
  \mathring X^i_t  &= \EXP[ \bar X^i_t \mid \VEC Y_{1:t}, \tilde U_{1:t-1}, Z_{1:t} ],
  \\
  \breve X^i_t &= \EXP[ X^i_t \mid \VEC Y_{1:t}, \tilde U_{1:t-1}, Z_{1:t} ].
\end{align*}
Since $X^i_t = \bar X^i_t + Z_t$, we have that $\breve X^i_t = \mathring X^i_t
+ Z_t$. 

From the Separation Theorem for linear quadratic Gaussian
systems~\cite{Caines:1987}, we know that the optimal (centralized)
control law for the auxiliary system is given by
\[
  \bar U^i_t = K^x_t \mathring X^i_t, \quad U^z_t = K^z_t Z_t,\quad i \in \{1,\ldots,N\}.
\]
Or equivalently, 
\[
  U^i_t = K^x_t \breve X^i_t + (K^z_t-K^x_t) Z_t, \quad i \in \{1,\ldots,n\}.
\]
Thus, to prove Theorem~\ref{Thm:KF}, we need to show that $\breve X^i_t = \hat
X^i_t$ (where $\hat X^i_t$ is defined in Theorem~\ref{Thm:KF}). This is proved
below.
\begin{lemma}
   In the auxiliary system with noisy observation, $\breve X^i_t = \hat
   X^i_t$.
\end{lemma}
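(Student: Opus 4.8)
The plan is to work in the linear--Gaussian regime --- by the Separation Theorem the optimal centralized strategy for the auxiliary system is linear in the state estimates, so all system variables are jointly Gaussian and conditional expectations reduce to orthogonal projections --- and to prove the claim by induction on $t$, carrying the inductive hypothesis that $\breve X^i_s=\hat X^i_s$ (and hence that the applied control is already known to have the local feedback form $U^i_s=K^x_s\hat X^i_s+(K^z_s-K^x_s)Z_s$) for all $s<t$ and all $i$. The goal at step $t$ is to show that the local Kalman filter error carries nothing that can be extracted from the extra data available centrally.

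First I would record the error of the local filter. Writing $e^i_t:=X^i_t-\hat X^i_t$ and subtracting the Kalman recursion of Theorem~\ref{Thm:KF} from the dynamics~\eqref{eq:model}, the control input and the mean-field terms cancel, leaving the autonomous recursion
\[
  e^i_{t+1}=(A_t-K^F_tC^x_t)\,e^i_t+W^i_t-K^F_tV^i_t,\qquad e^i_1=X^i_1 .
\]
Hence $e^i_t$ is a fixed linear function of subsystem $i$'s own primitives $(X^i_1,W^i_{1:t-1},V^i_{1:t-1})$, it is independent of the primitives of every other subsystem, and --- by the usual Kalman verification, which also produces the filter-Riccati equation for its covariance --- it is orthogonal to the local information $\sigma(Y^i_{1:t},Z_{1:t},U^i_{1:t-1})$; in particular $\hat X^i_t=\EXP[X^i_t\mid Y^i_{1:t},Z_{1:t},U^i_{1:t-1}]$.

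The key step is to show that $e^i_t$ is also orthogonal to $Y^j_{1:t}$ and $U^j_{1:t-1}$ for every $j\neq i$, which is where the coupling makes things delicate: subsystem $j$'s trajectory depends on subsystem $i$'s primitives through the common mean-field $Z$ (via the $D_tZ_t$ term in the dynamics and via $Z$ entering $U^j_t$), so $Y^j_\tau$ and $U^j_\tau$ are \emph{not} independent of $e^i_t$. I would resolve this by peeling off the common part: invoking the inductive hypothesis on the form of the past controls and unrolling the recursions gives, by a sub-induction on time, a decomposition $Y^j_\tau=\Phi^j_\tau+\Psi^j_\tau$ (and similarly for $U^j_\tau$) in which $\Phi^j_\tau$ is a linear function of subsystem $j$'s own primitives and $\Psi^j_\tau$ is a linear function of $Z_{1:\tau}$. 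Since $Z_{1:t}$ belongs to the information, projecting it out of $Y^j_\tau$ leaves the residual $\Phi^j_\tau$ minus its projection onto controller $i$'s local information; $\Phi^j_\tau$ involves only subsystem-$j$ primitives, hence is uncorrelated with $e^i_t$, and $e^i_t$ is already orthogonal to controller $i$'s local information, so the residual is orthogonal to $e^i_t$. Therefore $e^i_t$ is orthogonal to all of $\sigma(\VEC Y_{1:t},Z_{1:t},\tilde U_{1:t-1})$, which gives $\breve X^i_t=\EXP[X^i_t\mid\VEC Y_{1:t},Z_{1:t},\tilde U_{1:t-1}]=\hat X^i_t$ and closes the induction.

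I expect the only genuinely non-routine part to be the inductive bookkeeping that separates each $Y^j_\tau$ and $U^j_\tau$ into an own-primitive component and a mean-field component; once that is in place, everything else is the textbook Kalman orthogonality and Gaussian projection algebra. An equivalent and perhaps cleaner packaging of the same argument is to verify directly that the candidate decentralized law $U^i_t=K^x_t\hat X^i_t+(K^z_t-K^x_t)Z_t$ reproduces the optimal centralized actions $\bar U^i_t=K^x_t\mathring X^i_t$, $U^z_t=K^z_tZ_t$, using that $\mathring X^i_t=\breve X^i_t-Z_t$.
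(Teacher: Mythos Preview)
Your approach is quite different from the paper's. The paper gives a four-line argument: it asserts that, for $i\neq j$, the pair $(X^i_{1:t},Y^i_{1:t})$ is conditionally independent of $(X^j_{1:t},Y^j_{1:t})$, and then successively removes $Y^j_{1:t}$ and $U^j_{1:t-1}$ from the conditioning set. There is no error recursion, no induction on $t$, and no decomposition of $Y^j_\tau$ into own-primitive and mean-field parts.

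Your more explicit route has a genuine gap at the step where you claim, ``by the usual Kalman verification'', that the error $e^i_t$ is orthogonal to the local information $\sigma(Y^i_{1:t},Z_{1:t},U^i_{1:t-1})$. The standard Kalman derivation treats $Z_{1:t}$ as a \emph{deterministic} known input; it does not establish $e^i_t\perp Z_{1:t}$ when $Z$ is random and correlated with subsystem~$i$'s own primitives. Your own recursion gives $e^i_1=X^i_1$, and
\[
\mathrm{Cov}(e^i_1,Z_1)=\mathrm{Cov}\Big(X^i_1,\tfrac{1}{n}\textstyle\sum_{k} X^k_1\Big)=\tfrac{1}{n}\,\Sigma_X\neq 0,
\]
so already $e^i_1\not\perp Z_1$. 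This breaks the later step where you need $e^i_t\perp\Psi^j_\tau$ (since $\Psi^j_\tau$ is built from $Z_{1:\tau}$), and hence $e^i_t\perp Y^j_\tau$ does not follow from your argument. In other words, exactly the coupling through $Z$ that you correctly flag as ``delicate'' is not disposed of by the textbook Kalman orthogonality: the mean-field is simultaneously a known input to subsystem~$i$'s dynamics \emph{and} a noisy observation of $X^i_t$, and the filter of Theorem~\ref{Thm:KF} exploits it only in the first capacity. The paper's terse conditional-independence assertion glosses over the same issue, so your instinct that something non-routine is hiding here is well founded; but the orthogonality you invoke is not the right lever.
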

\begin{proof}
  Recall that $X^i_t$ evolves according to~\eqref{eq:model} and $Y^i_t$ is
  given by~\eqref{eq:noisy_observation}. Therefore, for any $i,j \in \{1,
  \dots, n\}$ such that $i \neq j$, $(X^i_{1:t}, Y^i_{1:t})$ is conditionally
  independent of $(X^j_{1:t}, Y^j_{1:t})$. Consequently,
  \begin{align*}
    \breve X^i_t &= \EXP[ X^i_t \mid \VEC Y_{1:t}, \tilde U_{1:t-1}, Z_{1:t} ]
    \\
    &= \EXP[ X^i_t \mid Y^i_{1:t}, \tilde U_{1:t-1}, Z_{1:t} ] \\
    &= \EXP[ X^i_t \mid Y^i_{1:t}, \VEC U_{1:t-1}, Z_{1:t} ] \\
    &\stackrel{(a)}= \EXP[ X^i_t \mid Y^i_{1:t}, U^i_{1:t-1}, Z_{1:t} ] \\
    &= \hat X^i_t,
  \end{align*}
  where $(a)$ follows because $X^i_t$ depends on $\VEC U_{1:t-1}$ only through
  $U^i_{1:t-1}$. 
    
\end{proof}

\section {Numerical Example}

We simulate an example of inspired by the  \emph{collective target tracking
mean-field model} of~\cite{KizilkaleMalhame:2013}. We consider a population
of space heaters. The state $X^i_t$ denotes the room temperature at
heater~$i$ at time~$t$. We assume that the inital mean temperature is $z_1$
degrees, with individual temperatures distributed according to Gaussian
distribution with unit variance around $z_1$ degrees. We assume that the
temperature dynamics are linearized around the operating point and are given
by
\begin{equation*}
  X^i_{t+1} = a(X^i_t - x_0) + b U^i_t + W^i_t
\end{equation*}
where $x_0$ is the ambient temperature, $W^i_t$ is the randomness due to the
environment and $U^i_t$ is the control action of a local controller. 

It is desired that the mean temperature increases to 21 degrees (denoted by
$z_{ref}$) in $T$ time steps (we assume $T = 90$ minutes). The per-step cost
function is
\[
  \frac 1n \sum_{i = 1}^n 
  \Big[
    q(X^i_t - X^i_1)^2 + r {U^i_t}^2 
  \Big]
  + p (Z_t - z_{ref})^2
\]
The rationale of this cost function is as follows. It is assumed that the
inital temperature is the comfort level of user~$i$; so we penalize
local deviations from the initial temperature. The second term corresponds
to the local control energy. The last term is the penalty for the mean-field
deviating from the reference mean-field. The objective is to minimize the
expected total cost over a finite horizon.

The above model is a tracking problem. The results of
Theorem~\ref{Thm:main_complete} extend to tracking problems in a natural
manner. We solve the Riccati equations for the following values of the
parameters:
\begin{align*}
  n &= 30,  & a &= 0.8, & b &= 1, \\
  q &= 0.5, & p &= 1, & r &= 1, \\
  x_0 &= 22, & z_1 &= 22, & z_{ref} &= 25, \\
  T &= 90, & W^i_t &\sim \mathcal N(0,1), & X^i_1 &\sim \mathcal N(22, 2).
\end{align*}
The resultant trajectories are shown in Fig.~\ref{fig:plot}.

\section {Conclusion}

We consider team-optimal control of finite number of mean-field coupled LQG
subsystems with mean-field sharing information structure. The optimal control
law is unique, identical for all controllers, and linear in the current state
and the current mean-field; the optimal gains are obtained by solving two
decoupled Riccati equations for the full observation case and an additional
filter Riccati equation for the noisy observation case. These Riccati
equations do not depend on the number of subsystems. 

To prove the main results, we consider an auxiliary centralized system whose
dynamics and cost are equivalent to the original decentralized system. It is
shown that the optimal centralized control law are implementable with
mean-field sharing. Thus, the optimal decentralized control laws are
identical to the optimal centralized control laws.

Although we only presented the results for the finite horizon optimal
regulation problem, the results extend to infinte horizon and to optimal
tracking problems in a natural manner. Moreover, it is possible to extend the
results to non-homogeneous population consisting of multiple types.

\bibliographystyle{IEEEtran}
\bibliography{IEEEabrv,../../collection,../../personal}

% Generated by IEEEtran.bst, version: 1.13 (2008/09/30)
\begin{thebibliography}{10}
\providecommand{\url}[1]{#1}
\csname url@samestyle\endcsname
\providecommand{\newblock}{\relax}
\providecommand{\bibinfo}[2]{#2}
\providecommand{\BIBentrySTDinterwordspacing}{\spaceskip=0pt\relax}
\providecommand{\BIBentryALTinterwordstretchfactor}{4}
\providecommand{\BIBentryALTinterwordspacing}{\spaceskip=\fontdimen2\font plus
\BIBentryALTinterwordstretchfactor\fontdimen3\font minus
  \fontdimen4\font\relax}
\providecommand{\BIBforeignlanguage}[2]{{%
\expandafter\ifx\csname l@#1\endcsname\relax
\typeout{** WARNING: IEEEtran.bst: No hyphenation pattern has been}%
\typeout{** loaded for the language `#1'. Using the pattern for}%
\typeout{** the default language instead.}%
\else
\language=\csname l@#1\endcsname
\fi
#2}}
\providecommand{\BIBdecl}{\relax}
\BIBdecl

\bibitem{AicardiDavoliMinciardi:1987}
M.~Aicardi, F.~Davoli, and R.~Minciardi, ``Decentralized optimal control of
  {Markov} chains with a common past information set,'' \emph{{IEEE} Trans.
  Autom. Control}, vol.~32, no.~11, pp. 1028--1031, 1987.

\bibitem{NMT:delay-sharing}
A.~Nayyar, A.~Mahajan, and D.~Teneketzis, ``Optimal control strategies in
  delayed sharing information structures,'' \emph{{IEEE} Trans. Autom.
  Control}, vol.~56, no.~7, pp. 1606--1620, Jul. 2011.

\bibitem{OoiVerboutLudwigWornell:1997}
J.~M. Ooi, S.~M. Verbout, J.~T. Ludwig, and G.~W. Wornell, ``A separation
  theorem for periodic sharing information patterns in decentralized control,''
  \emph{{IEEE} Trans. Autom. Control}, vol.~42, no.~11, pp. 1546--1550, Nov.
  1997.

\bibitem{Yuksel:2009}
S.~Y\"uksel, ``Stochastic nestedness and the belief sharing information
  pattern,'' \emph{{IEEE} Trans. Autom. Control}, pp. 2773--2786, Dec. 2009.

\bibitem{AM:mft-cdc}
J.~Arabneydi and A.~Mahajan, ``Team optimal control of coupled subsystems with
  mean field sharing,'' in \emph{Proc. 53rd IEEE Conf. Decision and Control},
  Los Angeles, CA, Dec. 2014, pp. 1669--1674.

\bibitem{NMT:partial-history-sharing}
A.~Nayyar, A.~Mahajan, and D.~Teneketzis, ``Decentralized stochastic control
  with partial history sharing: A common information approach,'' \emph{{IEEE}
  Trans. Autom. Control}, vol.~58, no.~7, pp. 1644--1658, Jul. 2013.

\bibitem{MMRY:tutorial-CDC}
A.~Mahajan, N.~Martins, M.~Rotkowitz, and S.~Y\"uksel, ``Information structures
  in optimal decentralized control,'' in \emph{Proc. 51st IEEE Conf. Decision
  and Control}, Maui, Hawaii, Dec. 2012, pp. 1291 -- 1306.

\bibitem{Huang:2007}
M.~Huang, P.~Caines, and R.~Malhame, ``Large population cost-coupled {LQG}
  problems with non-uniform agents: individual mass behavior and decentralized
  epsilon-nash equilibrium,'' \emph{{IEEE} Trans. Autom. Control}, vol.~52,
  no.~9, pp. 1560--1571, Sep. 2007.

\bibitem{Huang:2007b}
------, ``An invariance principle in large population stochastic dynamic
  games,'' \emph{Journal of Systems Science and Complexity}, vol.~20, pp.
  162--172, 2007.

\bibitem{LarsyLions:2007}
P.~Lions and J.~Larsy, ``Mean-field games,'' \emph{Japanese Journal of
  Mathematics}, vol.~2, no.~1, pp. 229--260, Mar. 2007.

\bibitem{LiZhangFeng:2008}
T.~Li and J.-F. Zhang, ``Asymptotically optimal decentralized control for large
  population stochastic multiagent systems,'' \emph{{IEEE} Trans. Autom.
  Control}, vol.~53, no.~7, pp. 1643--1660, 2008.

\bibitem{LarsyLions:2011}
O.~Gu{\'e}ant, J.-M. Lasry, and P.-L. Lions, \emph{Paris-Princeton Lecture on
  Mathematical Finance}.\hskip 1em plus 0.5em minus 0.4em\relax Springer, 2011,
  ch. Mean-field games and applications, pp. 205--266.

\bibitem{Huang:2012}
M.~Huang, P.~E. Caines, and R.~P. Malham{\'e}, ``Social optima in mean field
  {LQG} control: centralized and decentralized strategies,'' \emph{{IEEE}
  Trans. Autom. Control}, vol.~57, no.~7, pp. 1736--1751, 2012.

\bibitem{Elliott:2013}
R.~Elliott, X.~Li, and Y.-H. Ni, ``Discrete time mean-field stochastic
  linear-quadratic optimal control problems,'' \emph{Automatica}, vol.~49,
  no.~11, pp. 3222--3233, 2013.

\bibitem{Caines:1987}
P.~E. Caines, \emph{Linear stochastic systems}.\hskip 1em plus 0.5em minus
  0.4em\relax John Wiley \& Sons, Inc., 1987.

\bibitem{KizilkaleMalhame:2013}
A.~Kizilkale and R.~Malhame, ``Mean field based control of power system
  dispersed energy storage devices for peak load relief,'' in \emph{IEEE
  Conference on Decision and Control (CDC)}, Dec. 2013, pp. 4971--4976.

\end{thebibliography}

\end{document}